\documentclass[11pt]{amsart}

\usepackage[T1]{fontenc}
\usepackage{amsmath,amssymb,mathtools}
\usepackage{microtype}
\usepackage[
  hidelinks,
  pdfauthor={Xinan Dai, Wenhao Deng, Yingdong Shi, Tailin Wu, and Yuchen Yang}
]{hyperref}

\allowdisplaybreaks

\newtheorem{theorem}{Theorem}[section]
\newtheorem{lemma}[theorem]{Lemma}
\newtheorem{corollary}[theorem]{Corollary}
\theoremstyle{remark}
\newtheorem{remark}[theorem]{Remark}

\newcommand{\Q}{\mathbb{Q}}
\newcommand{\Z}{\mathbb{Z}}
\newcommand{\cminus}{C^{-}}

\title[Stanley's Problem 4 on differential posets]
{A Negative Answer to Stanley's Problem 4 on Differential Posets}

\author{Xinan Dai}
\address{Key Laboratory for Information Science of Electromagnetic Waves, College of Future Information Technology, Fudan University, Shanghai, China}
\curraddr{Department of Artificial Intelligence, School of Engineering, Westlake University, Hangzhou, China}
\email{xndai23@m.fudan.edu.cn}

\author{Wenhao Deng}
\thanks{Wenhao Deng is a student at the University of Glasgow and is currently an intern at the AI for Scientific Simulation and Discovery Lab, Westlake University.}
\address{University of Glasgow, Glasgow, United Kingdom}
\curraddr{Department of Artificial Intelligence, School of Engineering, Westlake University, Hangzhou, China}
\email{dengwenhao@westlake.edu.cn}

\author{Yingdong Shi}
\address{School of Information Science and Technology, ShanghaiTech University, Shanghai, China}
\email{shiyd2023@shanghaitech.edu.cn}

\author{Tailin Wu}
\address{Department of Artificial Intelligence, School of Engineering, Westlake University, Hangzhou, China}
\email{wutailin@westlake.edu.cn}

\author{Yuchen Yang}
\address{Department of Artificial Intelligence, School of Engineering, Westlake University, Hangzhou, China}
\email{yangyuchen@westlake.edu.cn}

\date{27 July 2026}
\subjclass[2020]{Primary 06A07; Secondary 05A15, 05E10}
\keywords{Differential poset, multichain generating function, rational formal power series, reflection construction, crown reflection}

\begin{document}

\begin{abstract}
In Problem~4 of his 1988 paper on differential posets, Stanley asked whether the weighted $k$-multichain series of an arbitrary differential poset is always a rational multiple of the $k$th power of its rank series. We answer this question in the negative. Already for $k=2$, we construct a locally finite $1$-differential poset $P$ for which
\[
\frac{M_{P,2}(q)}{F_P(q)^2}
\]
is not rational over any field of characteristic zero. The construction uses Wagner's crown reflection. At each sufficiently high rank there are two valid one-rank extensions with the same required Gram matrix and with new rank sizes differing by one. We show that the first quotient coefficient affected by that rank distinguishes the two choices. Choosing successively between them and diagonalizing against the rational power series produces the required poset. The same coefficient argument also yields continuum many distinct quotient series, of which continuum many are nonrational.
\end{abstract}

\maketitle
\tableofcontents

\section{Introduction}

\subsection{Differential posets}
Let
\[
P=\bigsqcup_{n\geq 0}P_n
\]
be a locally finite graded poset with least element $\widehat{0}$. For a positive integer $r$, Stanley calls $P$ \emph{$r$-differential} if the following conditions hold \cite[Definition~1.1]{Sta88}:
\begin{enumerate}
\item[(D1)] $P$ is locally finite and graded, with least element $\widehat{0}$;
\item[(D2)] two distinct elements have as many common upper covers as common lower covers;
\item[(D3)] an element with $d$ lower covers has $d+r$ upper covers.
\end{enumerate}
Each rank $P_n$ is finite. Put $p_n=|P_n|$, and let
\[
A_n\in\{0,1\}^{p_{n-1}\times p_n},
\qquad
(A_n)_{xy}=1\quad\Longleftrightarrow\quad x\lessdot y,
\]
be the incidence matrix between ranks $P_{n-1}$ and $P_n$. If $U$ and $D$ are the up and down operators, then Stanley's relation $DU-UD=rI$ is equivalent to
\begin{equation}
A_nA_n^{\mathsf T}-A_{n-1}^{\mathsf T}A_{n-1}=rI_{p_{n-1}},
\label{eq:gram}
\end{equation}
where $A_0$ is the empty $0\times1$ matrix \cite[Theorem~2.2]{Sta88}.

\subsection{Stanley's Problem 4}
The rank generating series of $P$ is
\[
F_P(q)=\sum_{n\geq0}p_nq^n.
\]
For a fixed positive integer $k$, define the weighted $k$-multichain series by
\begin{equation}
M_{P,k}(q)=
\sum_{\widehat{0}\leq x_1\leq\cdots\leq x_k}
q^{\rho(x_1)+\cdots+\rho(x_k)}.
\label{eq:multichain}
\end{equation}
Thus the least element is fixed below the $k$ displayed entries. This is the convention compatible with Stanley's phrase ``$k$-element multichains'' and with the identity for $k=1$ in the discussion following equation~(72) of \cite{Sta88}.

For Young's lattice $Y$, Stanley obtained
\[
M_{Y,k}(q)=\prod_{i\geq1}(1-q^i)^{-\min(i,k)}.
\]
Since $F_Y(q)=\prod_{i\geq1}(1-q^i)^{-1}$, it follows that
\begin{equation}
\frac{M_{Y,k}(q)}{F_Y(q)^k}
=
\prod_{1\leq i<k}(1-q^i)^{k-i},
\label{eq:young}
\end{equation}
a polynomial in $q$. Stanley then asked in Problem~4 whether an analogous rational-factor formula holds for every differential poset: namely, whether for each $P$ and fixed $k$ one can write
\begin{equation}
M_{P,k}(q)=R(q)F_P(q)^k
\label{eq:p4}
\end{equation}
with $R(q)$ rational. The case $k=1$ says nothing beyond the definition. Our example shows that \eqref{eq:p4} fails for $k=2$.

\subsection{Main theorem}
\begin{theorem}\label{thm:main}
There is a recursively specified locally finite $1$-differential poset $P$ such that
\[
Q_P(q):=\frac{M_{P,2}(q)}{F_P(q)^2}
\]
is not rational over any field of characteristic zero. Consequently, Stanley's Problem~4 has a negative answer.
\end{theorem}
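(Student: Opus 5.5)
The plan follows the abstract: build $P$ rank by rank with Wagner's crown reflection, and at each high rank choose between two valid extensions so as to defeat every candidate rational function in turn.

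\emph{Setting up the recursion.} We use Wagner's reflection framework. Given ranks $P_0,\dots,P_n$ satisfying \eqref{eq:gram} up to rank $n-1$, a one-rank extension is a $0/1$ matrix $A_{n+1}$ with
\[
A_{n+1}A_{n+1}^{\mathsf T}=A_n^{\mathsf T}A_n+I.
\]
The crown reflection produces such an $A_{n+1}$; this is Wagner's observation that any valid partial poset extends. Our first task is a local lemma: for every sufficiently large $n$ and every valid truncation, there are two extensions $A_{n+1}$ and $A'_{n+1}$, with $p'_{n+1}=p_{n+1}+1$, whose Gram matrices both equal the required $A_n^{\mathsf T}A_n+I$. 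We would try to obtain them by replacing one crown (a cycle-type configuration of new elements covering pairs of rank-$n$ elements) by a variant with one more element realizing the same pairwise common-cover counts. One example is to swap a configuration in which a pair has its common cover through one element for one that uses an additional element with a single lower cover. Because both extensions have the same Gram matrix, $A_{n+1}^{\mathsf T}A_{n+1}$ also determines the next required Gram matrix. The key point is that the continuation can then proceed from either choice.

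\emph{Coefficient computation.} Write $M_{P,2}(q)=\sum_{x\le y}q^{\rho(x)+\rho(y)}$. The coefficient of $q^m$ in $M_{P,2}$ depends only on ranks $\le m$, and likewise for $F_P$. Since $F_P(0)=1$, the coefficient $[q^m]Q_P$ depends only on $P_{\le m}$. Ranks above $n+1$ affect only coefficients of degree $>n+1$. So the first coefficient that can differ between the two choices at rank $n+1$ is $[q^{n+1}]Q_P$.

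At that degree, the rank-$(n+1)$ elements contribute to $M_{P,2}$ only through pairs $(x,y)=(\widehat0,y)$ with $\rho(y)=n+1$, giving $p_{n+1}$. They contribute to $F_P^2$ only through $2p_0p_{n+1}=2p_{n+1}$. Hence
\[
[q^{n+1}]Q_P=p_{n+1}-2p_{n+1}+(\text{terms depending on }P_{\le n})=-p_{n+1}+c_n.
\]
The two choices therefore give coefficients differing by exactly $1$. If it turns out that chains $x\le y$ with $\rho(x)+\rho(y)=n+1$ involve other contributions, we redo the bookkeeping carefully, but the dependence on $p_{n+1}$ stays linear with nonzero coefficient.

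\emph{Diagonalization.} Any rational power series over a characteristic-zero field whose coefficients are all integers lies in $\Q(q)$ by Fatou's lemma. Moreover $Q_P\in\Z[[q]]$, since $F_P(0)=1$. So it suffices to avoid the countably many rational series $R_1,R_2,\dots\in\Q[[q]]$ with integer coefficients. We choose an increasing sequence of ranks $n_1<n_2<\cdots$ beyond the threshold of the local lemma. At stage $j$, whatever $P_{\le n_j}$ is, the two choices give different values of $[q^{n_j+1}]Q_P$, and we pick the one that differs from $[q^{n_j+1}]R_j$. All later extensions leave this coefficient fixed, so $Q_P\ne R_j$ for every $j$. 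The resulting $P$ is locally finite and $1$-differential, because each step is a valid extension satisfying \eqref{eq:gram}, and it is recursively specified. Branching freely at infinitely many ranks gives $2^{\aleph_0}$ distinct quotients, of which only countably many can be rational.

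\emph{Main obstacle.} The hardest step is the local lemma: exhibiting two genuinely valid extensions with identical Gram matrix and rank sizes differing by one. We must also check that they remain valid for the subsequent crown reflection, since each choice must leave the next step's requirements depending only on the common Gram matrix. We would first look for a small gadget inside the crown structure in which a length-$2$ or length-$3$ crown can be replaced by an alternative covering pattern plus one extra element. We would then verify the row sums and pairwise overlaps directly, needing enough rank-$n$ elements with suitable overlap patterns, which is why the lemma is only claimed for large $n$.
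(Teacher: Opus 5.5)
\textbf{Gap: the local lemma is never proved.} This lemma carries the whole combinatorial content of the argument, and you leave it open. Your coefficient computation $[q^{N}]Q_P=-p_N+(\text{terms from }P_{\le N-1})$, the diagonalization against countably many rational series, and the reduction to $\Q(q)$ all match the paper. Two ingredients are missing.

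First, you need an explicit trade: two multisets of lower-cover sets for the new rank with the same $\sum_B \mathbf{1}_B\mathbf{1}_B^{\mathsf T}$ but sizes differing by one. The gadget you sketch does not work as described. A singleton block $\{a\}$ raises only the diagonal entry at $a$. Moreover, on a support of at most two points the block multiset is determined by the Gram matrix: there are $G_{ab}$ copies of $\{a,b\}$ and $G_{aa}-G_{ab}$ copies of $\{a\}$. So any trade must involve at least three points.

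The paper's trade works as follows. If $y\in P_{n-1}$ has exactly two lower covers, then (D3) gives $B_y=\{a,b,c\}$. In Stanley's reflection extension, the four blocks $\{a,b,c\},\{a\},\{b\},\{c\}$ are replaced by $\{a,b\},\{a,c\},\{b,c\}$. On both sides each point occurs twice and each pair once. Hence $A_{n+1}A_{n+1}^{\mathsf T}$ is unchanged while $p_{n+1}$ drops by one.

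Second, such a $y$ must exist in \emph{every} valid prefix at every later rank, whatever choices were made earlier. You assert this (``every valid truncation'') but give no reason. The paper shows that (D2) and (D3) force, in every $1$-differential prefix, elements $a_i,b_i$ with $C^{-}(a_{i+1})=\{a_i\}$ and $C^{-}(b_{i+1})=\{a_i,b_i\}$. So $b_{n-1}$ always has exactly two lower covers and supports the trade at rank $n+1$.

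\textbf{Two further corrections.} You claim the shared Gram matrix makes the next step's requirement depend only on it; this is wrong. The next requirement is $A_{n+1}^{\mathsf T}A_{n+1}+I$, which differs between the two choices; the two matrices do not even have the same size. Continuing is still possible, since Stanley's reflection extends any valid prefix. But the availability of the binary choice at the next stage must come from the persistence argument above, not from the common row Gram matrix.

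A minor point: Fatou's lemma concerns integrality of the denominator. You only need that a series in $\Q[[q]]$ rational over a characteristic-zero field is already rational over $\Q$, which is elementary linear algebra (or Hankel determinants).
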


The construction starts from Stanley's reflection extension and modifies one small part of it by a crown reflection. The ordinary and modified extensions have identical row Gram matrices, so both satisfy the differential relation, but their new ranks differ in size by one. That difference appears in a single, newly determined coefficient of $Q_P(q)$. Repeating the choice at successive ranks turns the family into a diagonal argument against all rational series.

\section{Two extensions with the same Gram matrix}

A finite graded poset through rank $n$ will be called \emph{$1$-differential through rank $n$} if (D2) and (D3) hold below its top rank. Equivalently, its incidence matrices satisfy \eqref{eq:gram} with $r=1$ for $1\leq i\leq n$.

Stanley's reflection construction extends any such finite prefix by one rank \cite[Proposition~6.1]{Sta88}. On the point set $P_n$, take the blocks
\begin{equation}
B_y=\{x\in P_n:y\lessdot x\}
\qquad (y\in P_{n-1})
\label{eq:blocks}
\end{equation}
and, in addition, one singleton block $\{x\}$ for every $x\in P_n$. These blocks, counted with multiplicity, are the lower-cover sets of the elements in the new rank.

\begin{lemma}[Crown trade]\label{lem:crown}
Suppose a finite $1$-differential prefix through rank $n$ contains an element $y\in P_{n-1}$ with exactly two lower covers. Then the prefix has two extensions through rank $n+1$ whose new rank sizes differ by one.
\end{lemma}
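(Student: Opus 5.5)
The plan is to compare Stanley's reflection extension with a modification that swaps out a small family of blocks. The replacement family will have the same point multiplicities and the same pair multiplicities, so both choices give the same Gram matrix $A_{n+1}A_{n+1}^{\mathsf T}$. Before the swap, I will make explicit why the blocks in \eqref{eq:blocks}, together with the singletons, give a valid extension. If the new rank $P_{n+1}$ has one element per block, then $(A_{n+1}A_{n+1}^{\mathsf T})_{xx'}$ counts the blocks containing both $x$ and $x'$. For $x\neq x'$ the blocks $B_y$ contribute the number of common lower covers of $x$ and $x'$, and singletons contribute nothing. For $x=x'$ the blocks $B_y$ contribute the number of lower covers of $x$, and its singleton contributes one more. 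Hence
\[
A_{n+1}A_{n+1}^{\mathsf T}=A_n^{\mathsf T}A_n+I_{p_n},
\]
which is \eqref{eq:gram} at rank $n+1$ with $r=1$. The key observation is that \eqref{eq:gram} depends on the block family only through two pieces of data: how many blocks contain each point, and how many contain each pair. Any other family with the same data is equally valid.

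Next I use the hypothesis on $y$. Since $y$ has exactly two lower covers, (D3) gives it exactly three upper covers, so $B_y=\{x_1,x_2,x_3\}$. In the reflection family, $B_y$ and the singletons $\{x_1\},\{x_2\},\{x_3\}$ form a subfamily of four blocks. In this subfamily each $x_i$ lies in two blocks and each pair $\{x_i,x_j\}$ lies in exactly one block. The crown replacement uses instead the three blocks
\[
\{x_1,x_2\},\quad \{x_2,x_3\},\quad \{x_1,x_3\}.
\]
Again each $x_i$ lies in two blocks and each pair lies in exactly one. The incidence graph between these three new elements and $x_1,x_2,x_3$ is a $6$-cycle, that is, a crown, which is where Wagner's construction enters. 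Leaving all other blocks unchanged, the modified family has the same point and pair multiplicities as the reflection family. By the first paragraph, it therefore also satisfies \eqref{eq:gram} at rank $n+1$, with exactly one fewer block.

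Finally I will check that both block families define posets that are $1$-differential through rank $n+1$. Each block is nonempty, so every new element covers something in $P_n$, the extended poset is graded, and $\widehat{0}$ remains the least element. The rank $P_{n+1}$ is finite, so local finiteness is preserved. (D3) for $x\in P_n$ follows from the diagonal of the Gram identity: the number of upper covers of $x$ is its number of lower covers plus one. (D2) for pairs in $P_n$ follows from the off-diagonal entries. The conditions for elements of the new top rank are not required through rank $n+1$, by the definition of a prefix that is $1$-differential through a given rank. The two extensions therefore have new rank sizes $p_{n+1}$ and $p_{n+1}-1$.

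I expect no serious obstacle. The one point needing care is bookkeeping. $B_y$ and the three singletons occur exactly once each in the multiset of blocks, so removing them is legitimate. Repeated blocks elsewhere, if any, are harmless, since multiplicity is allowed in \eqref{eq:blocks}. The hypothesis of exactly two lower covers is used only to guarantee $|B_y|=3$, which is exactly the size for which the crown trade works.
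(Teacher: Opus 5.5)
Your proposal is correct and matches the paper's proof: you replace the four blocks $B_y,\{x_1\},\{x_2\},\{x_3\}$ by the three pairs, note that point and pair multiplicities, and hence $A_{n+1}A_{n+1}^{\mathsf T}$, are unchanged, and get one fewer element in rank $n+1$. The only difference is that you also verify the reflection extension itself and derive (D2) and (D3) from the Gram identity, whereas the paper cites Stanley's Proposition~6.1 for these.
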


\begin{proof}
Condition (D3) gives $|B_y|=3$; write $B_y=\{a,b,c\}$. In the ordinary reflection extension, the four blocks involving this part of the construction are
\[
\{a,b,c\},\qquad \{a\},\qquad \{b\},\qquad \{c\}.
\]
Replace them by
\begin{equation}
\{a,b\},\qquad \{a,c\},\qquad \{b,c\}.
\label{eq:trade}
\end{equation}
On either side, each of $a,b,c$ occurs twice, and each pair of distinct points occurs together once. No other incidence changes. Hence the row Gram matrix $A_{n+1}A_{n+1}^{\mathsf T}$ is unchanged, and \eqref{eq:gram} remains valid. The ordinary extension uses four blocks here, whereas \eqref{eq:trade} uses three, so the latter has one fewer element in rank $n+1$.

This local replacement is Wagner's crown reflection; see Lewis \cite[Theorem~3.4 and pp.~11--12]{Lew07}.
\end{proof}

The next lemma ensures that the same choice can be made again at every later stage.

\begin{lemma}[A persistent crownable element]\label{lem:persistent}
Every finite $1$-differential prefix with top rank at least four contains an element in the penultimate rank having exactly two lower covers and three upper covers. After either extension in Lemma~\ref{lem:crown}, the resulting prefix has the same property.
\end{lemma}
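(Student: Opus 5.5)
The plan is to first reduce both assertions of Lemma~\ref{lem:persistent} to one statement about top ranks, and then prove that statement by induction on the rank.

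\emph{Reduction.} Let the prefix have top rank $n$. An element $x\in P_{n-1}$ lies strictly below the top rank, so (D3) holds at $x$. Hence ``exactly two lower covers'' already forces ``three upper covers''. For the second assertion, pass from rank $n$ to rank $n+1$ by either extension of Lemma~\ref{lem:crown}. The new penultimate rank is the old top rank $P_n$, and the lower covers of its elements are unchanged, since the extension only adds rank $n+1$. Both extensions satisfy \eqref{eq:gram} at rank $n+1$, so (D3) holds at $P_n$, and any $x\in P_n$ with two lower covers then has three upper covers. So both parts follow from a single claim.
\[
(\ast)\qquad \text{In a $1$-differential prefix with top rank $n$, every rank $3\le m\le n$ contains an element with exactly two lower covers.}
\]
Applying $(\ast)$ with $m=n-1\ge 3$ gives the first part, which is why top rank at least four is needed. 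Applying $(\ast)$ with $m=n$ to the old prefix gives the persistence part.

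\emph{Base of $(\ast)$.} The bottom of any $1$-differential prefix is forced. By (D3), $|P_1|=1$ and $P_2=\{u,v\}$, each of $u,v$ covering the unique rank-one element. By (D2), $u$ and $v$ share exactly one lower cover, so they share exactly one upper cover $x\in P_3$. Since $P_2=\{u,v\}$, this $x$ has exactly the two lower covers $u,v$.

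\emph{Inductive step of $(\ast)$.} Suppose $x\in P_m$ has exactly two lower covers $y_1,y_2$, where $m<n$. I want an element of $P_{m+1}$ with exactly two lower covers. I would strengthen the induction hypothesis to carry a companion element: an element $e_m\in P_m$ with a single lower cover, together with the two-lower-cover element placed next to it. In Young's lattice these are the one-row shape $(m)$ and the shape $(m-1,1)$.

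The key tool is the Gram relation \eqref{eq:gram} with $r=1$, read as follows.
\begin{enumerate}
\item[(a)] Distinct elements of rank $m$ share at most one upper cover.
\item[(b)] They share one exactly when they share a lower cover.
\end{enumerate}
Take $e=e_m$ and $x$, which share the lower cover $e_{m-1}$. By (a) and (b), they have a unique common upper cover $z\in P_{m+1}$.

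I would then show that $z$ has no lower cover other than $e$ and $x$. Suppose $z$ covered a third element $t\in P_m$. Then $t$ and $e$ would share the upper cover $z$, so by (b) they would share a lower cover. That lower cover must be $e$'s unique lower cover $e_{m-1}$. So $t$ is an upper cover of $e_{m-1}$. Since $e_{m-1}$ has at most two lower covers, it has at most three upper covers, so the candidates for $t$ are very few. I would rule each candidate out with (a) and (b), using that $t$ and $x$ would also have to share a lower cover.

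To keep the induction running I also need a new thin element $e_{m+1}$ among the upper covers of $e_m$. I would get it by the same counting applied to the two upper covers of $e_m$.

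\emph{Main obstacle.} This last local analysis is where I expect the difficulty. Arbitrary $1$-differential prefixes need not look like Young's lattice, so a thin chain $e_m$ might fail to exist in some prefix. If that happens, I would replace the chain by a purely counting argument on $A_{m+1}$. Summing \eqref{eq:gram} over rows and over pairs of rows controls the number of elements $z\in P_{m+1}$ with $\binom{\#\text{lower covers}}{2}=1$. I would try to show that the element produced at rank $m$ forces at least one such $z$.
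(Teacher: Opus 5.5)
Your reduction to $(\ast)$ and your base case are correct. The invariant you reach for is also the right one: a thin element $e_m$ together with a neighbour covering the same element below it is exactly the paper's pair $a_m,b_m$. The gap is that the inductive step, which is the entire content of the lemma, is never carried out, and the version you sketch would not close.

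You only grant that $e_{m-1}$ has at most two lower covers. If it had two, it would have a third upper cover $t$. That $t$ can lie below the common upper cover $z$ of $e_m$ and $x$ without violating (a) or (b): this is just the three-element reflection block $B_{e_{m-1}}$ of \eqref{eq:blocks}. So ``rule each candidate out with (a) and (b)'' is not available in that generality.

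Your counting fallback does not rescue this. Take $d(z)=|C^{-}(z)|$ for $z\in P_{m+1}$. The sums $\sum_z d(z)$ and $\sum_z\binom{d(z)}{2}$ take the same values on the two sides of the crown trade in Lemma~\ref{lem:crown}: one three-element lower-cover set plus three singletons versus three two-element sets. Such sums therefore cannot force an element with exactly two lower covers.

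What you are missing is that the thin chain is forced in every prefix, and that thinness one rank down is what makes the candidate set trivial. Carry the invariant $C^{-}(e_m)=\{e_{m-1}\}$ and $C^{+}(e_{m-1})=\{e_m,x_m\}$. Start at $m=2$ with $e_1$ the rank-one element and $\{e_2,x_2\}=P_2$; so do not require $x_m$ itself to have two lower covers.

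\begin{enumerate}
\item Then $e_m$ and $x_m$ share exactly the lower cover $e_{m-1}$, hence have a unique common upper cover $z$. By (D3), $e_m$ has exactly one further upper cover $z'$.
\item Suppose $w\neq e_m$ lies below $z$ or $z'$. Then $w$ shares an upper cover with $e_m$, hence a lower cover, which can only be $e_{m-1}$. So $w\in C^{+}(e_{m-1})=\{e_m,x_m\}$, i.e.\ $w=x_m$.
\item For $z$ this adds nothing. For $z'$ it would give $e_m,x_m$ a second common upper cover, contradicting (D2).
\item Thus $C^{-}(z)=\{e_m,x_m\}$ and $C^{-}(z')=\{e_m\}$. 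Setting $x_{m+1}=z$ and $e_{m+1}=z'$ restores the invariant, because $C^{+}(e_m)=\{z,z'\}$.
\end{enumerate}

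This gives an element with exactly two lower covers in every rank from $3$ up to the top of any prefix. It is precisely the paper's argument with $a_i=e_i$ and $b_i=x_i$.
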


\begin{proof}
The ranks through $P_2$ are forced. Let $a_0=\widehat{0}$, let $a_1$ be the unique element of rank one, and denote the two elements of rank two by $a_2$ and $b_2$. We claim that every subsequent valid extension contains elements $a_i,b_i$ for which
\begin{equation}
\cminus(a_{i+1})=\{a_i\},
\qquad
\cminus(b_{i+1})=\{a_i,b_i\}
\qquad (i\geq2).
\label{eq:threads}
\end{equation}
Assume these elements have been chosen through rank $i$. The element $a_{i-1}$ has one lower cover, and therefore exactly two upper covers, namely $a_i$ and $b_i$. Since $a_i$ and $b_i$ share the lower cover $a_{i-1}$, condition (D2) gives a unique common upper cover $x$ in rank $i+1$. The element $a_i$ has one lower cover, so it has one further upper cover $z$.

We show that no additional element lies below either $x$ or $z$. If $w$ were such an element, then $a_i$ and $w$ would share an upper cover. By (D2), they would also share a lower cover. The only possibility is $a_{i-1}$, whose upper covers are $a_i$ and $b_i$. Thus $w=b_i$. This is already one of the lower covers of $x$, while for $z$ it would give a second common upper cover of $a_i$ and $b_i$. Both possibilities contradict the choice of $w$ as an additional lower cover. Hence
\[
\cminus(x)=\{a_i,b_i\},
\qquad
\cminus(z)=\{a_i\}.
\]
Taking $b_{i+1}=x$ and $a_{i+1}=z$ proves \eqref{eq:threads} inductively.

Now let the top rank be $P_n$ with $n\geq4$. The element $b_{n-1}$ has exactly two lower covers and, by (D3), three upper covers in $P_n$. It therefore supports the crown trade. Since the preceding argument applies to every valid one-rank extension, the required configuration is present again after either choice.
\end{proof}

Start with the ordinary reflection prefix through rank four, whose rank sizes are
\[
1,1,2,3,5.
\]
Fix labels $a_2,b_2$ for the two elements of rank two, and thereafter continue the distinguished chains $(a_i)$ and $(b_i)$ by the unique choices made in the proof of Lemma~\ref{lem:persistent}. Thus, when a prefix has been fixed through rank $N-1$, the element $b_{N-2}$ is a specified crownable element: it has exactly two lower covers and three upper covers in rank $N-1$. At rank $N$ we apply either the ordinary reflection extension or the crown trade at this distinguished element.

By Lemmas~\ref{lem:crown} and \ref{lem:persistent}, this binary choice is available at every rank $N\geq5$. An infinite sequence of choices has a compatible union, and that union is a locally finite $1$-differential poset: each instance of (D2) and (D3) is decided inside some finite prefix. Once the initial labels and the binary sequence are fixed, the construction is recursive. Lewis used this family to obtain uncountably many nonisomorphic differential posets \cite{Lew07}. Here we use the same family for a different purpose, by reading its choices from the coefficients of a generating-function quotient.

\section{The quotient detects the first differing rank}

Since both $M_{P,2}(q)$ and $F_P(q)^2$ have integer coefficients and constant term one, the quotient
\[
Q_P(q)=\frac{M_{P,2}(q)}{F_P(q)^2}
\]
is a well-defined element of $\Z[[q]]$.

\begin{lemma}[First differing coefficient]\label{lem:coefficient}
Let two graded prefixes agree through rank $N-1$ and differ only in their rank-$N$ extensions. If
\[
\delta=p_N'-p_N,
\]
then
\[
[q^i]Q_{P'}=[q^i]Q_P\quad (i<N)
\]
and
\begin{equation}
[q^N]Q_{P'}-[q^N]Q_P=-\delta.
\label{eq:coefficient-change}
\end{equation}
\end{lemma}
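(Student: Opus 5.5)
Work with truncations modulo $q^{N+1}$. Write $M_{P,2}(q)=\sum_n m_nq^n$. A pair $\widehat 0\le x_1\le x_2$ contributes to $m_n$ only if $\rho(x_1)+\rho(x_2)=n$. Since $\rho(x_1)\le\rho(x_2)$, the coefficients $m_i$ with $i<N$ involve only elements of rank at most $N-1$. The same holds for $p_i$ with $i<N$. For $i=N$, a pair with $\rho(x_1)+\rho(x_2)=N$ can involve rank $N$ only when $x_1=\widehat 0$ and $x_2\in P_N$. Any other pair has both ranks at most $N-1$, and it lies entirely in the common prefix. Hence
\[
m_N'-m_N=\delta,\qquad m_i'=m_i\ (i<N),\qquad p_N'-p_N=\delta,\qquad p_i'=p_i\ (i<N).
\]
Whether $x_1\le x_2$ holds for elements of rank at most $N-1$ is decided inside the prefix, because the order is generated by the cover relations among ranks $\le N-1$. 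So these coefficients agree.

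Next we compare the squares. Since $p_0=p_0'=1$, the coefficient of $q^N$ in $F^2$ is $\sum_{j}p_jp_{N-j}$, and $p_N$ appears only in the two terms $2p_0p_N$. Therefore $[q^N](F_{P'}^2-F_P^2)=2\delta$, and all lower coefficients of the squares agree.

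Finally, write $Q=M\cdot(F^2)^{-1}$. The inverse $G=(F^2)^{-1}$ has constant term $1$, and its coefficients through $q^{N-1}$ depend only on those of $F^2$. From $F^2G=1$ we get $[q^N](G'-G)=-[q^N](F'^2-F^2)\cdot 1=-2\delta$. Then
\[
[q^N]Q=\sum_j m_j\,[q^{N-j}]G,
\]
and only the terms $j=N$ and $j=0$ change. Their contributions are $\delta\cdot 1$ from the change in $m_N$ and $m_0\cdot(-2\delta)=-2\delta$ from the change in $G$. The total change is $-\delta$, and the coefficients $[q^i]Q$ for $i<N$ are unchanged.

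The argument is almost entirely bookkeeping. The step needing care is the claim about $m_N$: that the only new multichains in degree $N$ are $(\widehat0,x_2)$ with $x_2\in P_N$, and that the count of all others depends only on the shared prefix. This uses $\rho(x_1)\le\rho(x_2)$ together with the convention in \eqref{eq:multichain}, which gives $m_0=1$.
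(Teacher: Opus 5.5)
Your proof is correct and follows essentially the paper's argument. You use the same observation that the only new degree-$N$ multichains are the pairs $(\widehat{0},x)$ with $x\in P_N$, and the same $2\delta$ change in $F^2$; you then extract the coefficient through the inverse $G=(F^2)^{-1}$ instead of comparing degree $N$ in $M=QF^2$, which is an immaterial difference (and you make explicit the point, left implicit in the paper, that comparability among ranks $\le N-1$ is decided inside the common prefix).
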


\begin{proof}
Nothing below degree $N$ depends on rank $N$, so the quotient coefficients below that degree agree. A rank-$N$ element contributes to a two-multichain of total weight $N$ only in the pair $(\widehat{0},x)$. Therefore
\begin{equation}
[q^N]\bigl(M_{P',2}-M_{P,2}\bigr)=\delta.
\label{eq:M-change}
\end{equation}
In $F_P(q)^2$, a new rank-$N$ element can be paired with the rank-zero term in either order, and no other new product has degree $N$. Hence
\begin{equation}
[q^N]\bigl(F_{P'}^2-F_P^2\bigr)=2\delta.
\label{eq:F-change}
\end{equation}
Use $M_{P,2}=Q_PF_P^2$ and compare degree $N$. The two quotients and the two squared rank series have the same lower coefficients, and all have constant term one where appropriate. Combining \eqref{eq:M-change} and \eqref{eq:F-change} gives
\[
[q^N](Q_{P'}-Q_P)=\delta-2\delta=-\delta,
\]
which is \eqref{eq:coefficient-change}.
\end{proof}

At rank $N$, ordinary reflection has one more element than crown reflection. Thus, if ordinary reflection gives the coefficient $c=[q^N]Q_P$, crown reflection gives $c+1$. Once rank $N$ has been fixed, later extensions cannot change this coefficient.

\section{Diagonalization against rational series}

We first note that enlarging the coefficient field does not affect rationality for a series with rational coefficients.

\begin{lemma}\label{lem:field}
Let $K$ be a field of characteristic zero and let $S(q)\in\Q[[q]]$. If $S(q)$ is rational over $K$, then it is rational over $\Q$.
\end{lemma}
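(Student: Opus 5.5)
The plan is to characterize rationality by a linear-algebra condition that involves only the coefficients of $S$, and then to show that this condition does not depend on the field. Write $S(q)=\sum_{n\ge0}s_nq^n$ with $s_n\in\Q$. A formal power series over a field $L$ is rational over $L$ exactly when there are an integer $d\ge0$ and scalars $c_1,\dots,c_d\in L$, not all needing to be nonzero, such that
\[
s_{n}+c_1s_{n-1}+\cdots+c_ds_{n-d}=0\qquad\text{for all } n\ge n_0
\]
for some threshold $n_0$. One direction comes from multiplying $S$ by a denominator $1+c_1q+\cdots+c_dq^d$, which we may normalize to have constant term $1$ because a rational power series can be written with a denominator not divisible by $q$. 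The converse holds because if the recurrence holds from $n_0$ on, then the product of $S$ with that polynomial is a polynomial. I will first record this standard equivalence.

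Next I will phrase the recurrence as a linear system. Suppose $S$ is rational over $K$, and take $d$, $n_0$ and $c_1,\dots,c_d\in K$ as above. Consider the infinite homogeneous linear system in the unknowns $(x_0,x_1,\dots,x_d)$:
\[
x_0s_n+x_1s_{n-1}+\cdots+x_ds_{n-d}=0\qquad(n\ge n_0),
\]
whose coefficients all lie in $\Q$. The vector $(1,c_1,\dots,c_d)$ is a solution over $K$. The solution space over any field containing $\Q$ is the null space of a matrix with rational entries. Since only $d+1$ unknowns are involved, finitely many of the equations, say those with $n_0\le n\le n_1$, already cut out the whole solution space over $\Q$.

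The key step is that the dimension of the null space of a rational matrix is the same over $\Q$ and over $K$, because rank is computed by minors, or equivalently by Gaussian elimination, which stays inside $\Q$. In particular, the solution space over $K$ equals $K\otimes_\Q V$, where $V$ is the solution space over $\Q$. The $K$-solution has $x_0=1$, so the linear functional $x_0$ does not vanish identically on $K\otimes V$. It therefore does not vanish identically on $V$, since $V$ spans $K\otimes V$. This gives a rational solution $(1,c_1',\dots,c_d')\in\Q^{d+1}$, and $S$ is rational over $\Q$.

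The main point needing care is the reduction from infinitely many equations to a finite system. A decreasing chain of subspaces of $\Q^{d+1}$ stabilizes. I also need the compatibility $(\text{solutions over }K)=K\otimes(\text{solutions over }\Q)$ for the infinite system, and I will deduce it from the finite stabilized system. Concretely, the $K$-solutions of the full system are contained in the $K$-solutions of the finite subsystem, which equal $K\otimes V$, and the containment in the other direction is clear. Everything else is routine.
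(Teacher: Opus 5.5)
Your proof is correct and follows essentially the same route as the paper. Both arguments rewrite rationality as a finite-order linear recurrence, treat it as a linear system with rational coefficients in which finitely many equations suffice, and use the field-independence of Gaussian elimination (rank) to get a rational solution; the only differences are cosmetic: you use the homogeneous system in $(x_0,\dots,x_d)$ with the functional $x_0$ where the paper uses the inhomogeneous augmented system and its consistency, and you spell out the standard equivalence between rationality and linear recurrences that the paper takes for granted.
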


\begin{proof}
Embed $\Q$ into $K$. If $S$ is rational over $K$, its coefficients satisfy a linear recurrence of some finite order:
\begin{equation}
s_n+b_1s_{n-1}+\cdots+b_ds_{n-d}=0
\qquad (n\geq n_0),
\label{eq:recurrence}
\end{equation}
with $b_1,\ldots,b_d\in K$. These are linear equations in the finitely many unknowns $b_1,\ldots,b_d$, and all their coefficients lie in $\Q$. Regard each equation as an augmented row vector in the finite-dimensional space $\Q^{d+1}$. Finitely many of these row vectors span all the others. The corresponding finite rational linear system is consistent over $K$ and hence, by Gaussian elimination over $\Q$, has a solution in $\Q^d$. Every remaining equation is a rational linear combination of the chosen ones, so this rational solution satisfies the full recurrence \eqref{eq:recurrence}. Therefore $S(q)$ is rational over $\Q$.
\end{proof}

\begin{theorem}\label{thm:construction}
There exists a recursively specified locally finite $1$-differential poset $P$ such that $Q_P(q)$ is not rational over any field of characteristic zero.
\end{theorem}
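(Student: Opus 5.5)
We argue by diagonalization against an enumeration of all rational power series with rational coefficients. By Lemma~\ref{lem:field}, it suffices to arrange that $Q_P(q)$ differs from every series in $\Q[[q]]$ that is rational over $\Q$. There are only countably many such series, since each is a quotient $A(q)/B(q)$ of polynomials in $\Q[q]$ with $B(0)\neq 0$. We fix an explicit enumeration $R_1,R_2,R_3,\ldots$ of them. For instance, we can list all pairs of polynomials with rational coefficients and keep those with $B(0)\ne0$; repetitions are harmless. An explicit enumeration keeps the construction recursive, since every coefficient of each $R_j$ is computable from $(A,B)$.

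We then build the prefix rank by rank, starting from the ordinary reflection prefix through rank four and using the ranks $N=5,6,7,\ldots$ in turn. At stage $j$ we handle $N=j+4$. At that point the prefix is fixed through rank $N-1$, and by Lemmas~\ref{lem:crown} and~\ref{lem:persistent} the distinguished element $b_{N-2}$ allows both the ordinary extension and the crown trade at rank $N$. Let $c$ be the coefficient $[q^N]Q$ produced by the ordinary extension. It is computable from the finite prefix, since $[q^N]Q$ depends only on ranks $\le N$. By Lemma~\ref{lem:coefficient} with $\delta=-1$, the crown extension produces the coefficient $c+1$. Among these two values at most one equals $[q^N]R_j$. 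We choose the extension whose coefficient differs from $[q^N]R_j$, preferring the ordinary extension in case of a tie so that the choice is deterministic.

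We take $P$ to be the union of the resulting compatible prefixes. As noted after Lemma~\ref{lem:persistent}, $P$ is a locally finite $1$-differential poset, and it is recursively specified because each choice is computed from finite data. By Lemma~\ref{lem:coefficient}, later extensions leave the coefficients of degree $\le N$ unchanged. Hence $[q^{j+4}]Q_P\neq[q^{j+4}]R_j$ for every $j$, so $Q_P$ equals no $R_j$, and therefore $Q_P$ is not rational over $\Q$. Since $Q_P\in\Z[[q]]\subset\Q[[q]]$, Lemma~\ref{lem:field} shows that $Q_P$ is not rational over any field of characteristic zero.

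The main point to check is the bookkeeping around Lemma~\ref{lem:coefficient}. The degree-$N$ quotient coefficient must be frozen once rank $N$ is fixed, and the two options must differ by exactly one. Both facts come directly from that lemma, together with the observation that $M_{P,2}$ and $F_P^2$ in degree $\le N$ involve only ranks $\le N$. The rest of the argument is the routine countability argument.
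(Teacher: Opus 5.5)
Your proposal is correct and is essentially the paper's own proof. Both diagonalize at rank $N=j+4$ against an enumeration of rational series, choose the crown trade exactly when the ordinary coefficient $c$ equals $[q^N]R_j$, use Lemma~\ref{lem:coefficient} to get $c+1$ and to freeze that coefficient, and finish with Lemma~\ref{lem:field}. The only difference is cosmetic: you enumerate all rational series with $B(0)\neq 0$ rather than only those with constant term one, which is harmless.
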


\begin{proof}
List all elements of $\Q(q)$ that are regular at $q=0$ and have constant term one as
\[
R_1(q),R_2(q),\ldots.
\]
Such a list may be obtained, for example, by enumerating pairs of integer polynomials representing their numerators and denominators. Repetitions in the list are harmless.

Begin with the ordinary $1$-differential prefix through rank four, with the distinguished labels and chains fixed as above. At stage $m$, set $N=m+4$, so the current prefix is already fixed through rank $N-1$. Form its ordinary reflection extension through rank $N$ and use the distinguished crownable element $b_{N-2}$ for the alternative crown extension. Compute the coefficient
\[
c=[q^N]Q
\]
from that finite prefix. If $c\neq[q^N]R_m(q)$, keep the ordinary extension. If equality holds, take the crown extension instead. Lemma~\ref{lem:coefficient} changes the coefficient in the second case from $c$ to $c+1$.

The construction can be continued indefinitely. Its union is a locally finite $1$-differential poset, and for every $m$ the resulting quotient differs from $R_m$ in degree $m+4$. Hence
\[
Q_P(q)\notin\Q(q).
\]
Lemma~\ref{lem:field} rules out rationality over every characteristic-zero field.
\end{proof}

Theorem~\ref{thm:main} follows immediately. The same argument records more than the existence of one counterexample.

\begin{corollary}\label{cor:continuum}
The crown/reflection family gives continuum many distinct quotient series $Q_P(q)$. Continuum many of them are nonrational over every field of characteristic zero.
\end{corollary}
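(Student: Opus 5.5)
The plan is to index the construction by binary sequences $\varepsilon=(\varepsilon_5,\varepsilon_6,\ldots)\in\{0,1\}^{\{N\geq5\}}$. At rank $N$ we take the ordinary reflection extension if $\varepsilon_N=0$ and the crown trade at the distinguished element $b_{N-2}$ if $\varepsilon_N=1$. By Lemmas~\ref{lem:crown} and~\ref{lem:persistent}, and the discussion after them, each such sequence determines a locally finite $1$-differential poset $P_\varepsilon$. There are continuum many sequences, so the whole task is to show that the map $\varepsilon\mapsto Q_{P_\varepsilon}(q)$ is injective.

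For injectivity, let $\varepsilon\neq\varepsilon'$, and let $N$ be the first index with $\varepsilon_N\neq\varepsilon'_N$. Then $P_\varepsilon$ and $P_{\varepsilon'}$ are built from the same prefix through rank $N-1$. This uses that the labels and distinguished chains are fixed deterministically by the same choices. At rank $N$ the two posets differ only in the extension choice. By Lemma~\ref{lem:crown}, the rank sizes satisfy $|p_N-p'_N|=1$. Lemma~\ref{lem:coefficient} then gives $[q^N]Q_{P_\varepsilon}\neq[q^N]Q_{P_{\varepsilon'}}$. This coefficient is unaffected by later ranks, since Lemma~\ref{lem:coefficient} shows coefficients of degree below the first differing rank agree. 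Hence the quotient series are pairwise distinct, and we obtain continuum many of them.

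For the second claim, $\Q(q)$ is countable, so at most countably many of these series lie in $\Q(q)$. Removing a countable set from a set of continuum cardinality leaves continuum many series that are not in $\Q(q)$. Each $Q_{P_\varepsilon}$ lies in $\Z[[q]]\subseteq\Q[[q]]$, so Lemma~\ref{lem:field} upgrades nonrationality over $\Q$ to nonrationality over every field of characteristic zero.

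The main point needing care is the claim that two sequences agreeing up to $N-1$ produce literally the same prefix through rank $N-1$, so that Lemma~\ref{lem:coefficient} applies. This holds because the construction is deterministic given the initial labels and the choices already made. In particular, the distinguished crownable element $b_{N-2}$ is specified by the proof of Lemma~\ref{lem:persistent}, so no additional choice enters. Everything else is a cardinality count.
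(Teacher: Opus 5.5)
Your proof is correct and takes the same approach as the paper. You show the map from choice sequences to quotient series is injective, using the first differing rank and Lemma~\ref{lem:coefficient}, then discard the countably many members of $\Q(q)$ and apply Lemma~\ref{lem:field}. Your additional remarks, that agreeing choices give literally the same prefix and that $[q^N]Q_P$ depends only on ranks at most $N$, make explicit points the paper leaves implicit.
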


\begin{proof}
Take two different infinite binary choice sequences, and let $N$ be the first rank at which they differ. Lemma~\ref{lem:coefficient} shows that the corresponding quotient series agree below degree $N$ and differ in degree $N$. The map from choice sequences to quotient series is therefore injective, so its image has cardinality continuum.

There are only countably many rational functions in $\Q(q)$. By Lemma~\ref{lem:field}, a series in $\Q[[q]]$ that is rational over a characteristic-zero field is already rational over $\Q$. It follows that all but countably many series in the image are nonrational over every such field.
\end{proof}

\begin{remark}
The proof is entirely theoretical. A finite computation can illustrate the crown trade or verify initial coefficients, but no search or computer-assisted step enters Theorem~\ref{thm:construction}.
\end{remark}

\section{Relation to earlier work}

The reflection/crown family is due to earlier work. Wagner introduced the crown operation, and Lewis used repeated reflection and crown choices to construct uncountably many differential posets \cite{Lew07}. The point needed here is different: Lemma~\ref{lem:coefficient} shows that the first rank at which two choices differ is visible in the quotient $M_{P,2}(q)/F_P(q)^2$. This observation turns the family into both an injective encoding by formal power series and a diagonal construction against rational functions.

Some related generating-function formulas concern statistics other than \eqref{eq:multichain}. Butler studied chains of ordinary partitions \cite{But89}. Stanley's later formulas for multichains in the Fibonacci lattices $\mathrm{Fib}(r)$ and $Z(r)$ weight the rank of the top element, rather than the sum of the ranks of all entries \cite{Sta90}. The Young--Fibonacci series considered by Petrov and Scott likewise arise from a different weighting \cite{PS26}. These results therefore do not imply the rationality asked for in Problem~4.

A search of the differential-poset literature available to the authors did not reveal an earlier negative answer to Problem~4 or the coefficient-separation argument above. As usual, a literature search cannot by itself establish priority. The claim of the present paper is the precise theorem proved here: the rational-factor formula proposed in Problem~4 fails, already for a $1$-differential poset and $k=2$.

\section*{Statement on AI-assisted discovery and human verification}

The counterexample were found by the TARS agent system during an autonomous mathematical search. Xinan Dai completed proof reconstruction, literature review and manuscript writing and verified all mathematical results manually.

\end{document}